\documentclass[12pt]{amsart}
\usepackage{amssymb,amsmath}
\usepackage{mathtools}  
\usepackage{graphicx}
\usepackage{bm,bbm}
\usepackage{color}
\usepackage{pgfplots}
\usepackage[hidelinks]{hyperref}
\newtheorem{theorem}{Theorem}
\newtheorem{lemma}[theorem]{Lemma}
\newtheorem{remark}[theorem]{Remark}
%
\newcommand{\cH}{\ensuremath{\mathcal{H}}}
\newcommand{\V}{\ensuremath{\mathcal{V}}}
\newcommand{\Q}{\ensuremath{\mathcal{Q}} }
\newcommand{\uo}{\ensuremath{u_0}}
\newcommand{\po}{\ensuremath{p_0}}

\def\R{\mathbb{R}}

%
\definecolor{myBlue1}{RGB}{101,149,239}  
\definecolor{myBlue2}{RGB}{113,104,238} 
\definecolor{myBlue3}{RGB}{30,144,255} 
\definecolor{myGreen1}{RGB}{154,204,50} 
\definecolor{myGreen2}{RGB}{69,169,0} 
\definecolor{myGreen3}{RGB}{154,205,50} 
\definecolor{myGreen4}{RGB}{105,139,34} 
\definecolor{myRed1}{RGB}{210,105,30} 
\definecolor{myRed2}{RGB}{165,42,42} 
\definecolor{myRed3}{RGB}{139,26,26} 
\definecolor{lightgray}{RGB}{175,175,175} 
\definecolor{myLGray}{RGB}{225,225,225} 
%

%

\DeclareMathOperator{\diag}{diag}

\newcommand{\calB}{\ensuremath{\mathcal{B}} }

\newcommand{\calD}{\ensuremath{\mathcal{D}} }

\newcommand{\calK}{\ensuremath{\mathcal{K}} }

%

\def\dt{\,\text{d}t}
\def\dx{\,\text{d}x}

\newcommand{\partialkn}{\ensuremath{\partial_{\kappa,n}} }
\newcommand{\hook}{\hookrightarrow}
\textheight=210mm
\textwidth=160mm
\hoffset=-20mm
\voffset=-5mm
\begin{document}
\title[A PDAE Formulation of Dynamic Boundary Conditions]{A PDAE Formulation of Parabolic Problems with\\ Dynamic Boundary Conditions}
\author{R. Altmann}
\email{robert.altmann@math.uni-augsburg.de}
\address{Department of Mathematics, University of Augsburg, Universit\"atsstr.~14, 86159 Augsburg, Germany}
\date{\today}
%
%
\begin{abstract}
The weak formulation of parabolic problems with dynamic boundary conditions is rewritten in form of a partial differential-algebraic equation. More precisely, we consider two dynamic equations with a coupling condition on the boundary. This constraint is included explicitly as an additional equation and incorporated with the help of a Lagrange multiplier. Well-posedness of the formulation is shown. 
\end{abstract}
%
%
\maketitle
%
{\tiny {\bf Key words.} dynamic boundary conditions, PDAE, parabolic equations, bulk-surface coupling}\\
\indent
{\tiny {\bf AMS subject classifications.}  {\bf 35K20}, {\bf 65J10}, {\bf 34A09}} 
%
%
\section{Introduction}
Within this paper, we present a new abstract formulation of parabolic initial-boundary value problems with dynamic boundary conditions. This includes locally reacting {\em Wentzell boundary conditions} as well as non-local {\em kinetic boundary conditions} modeling a diffusion on the surface of the computational domain. In both cases, we derive a formulation as partial differential-algebraic equation (PDAE) where the boundary condition is treated in form of a coupling condition. This constraint is included to the system as an additional equation and enforced with the help of a Lagrange multiplier. The resulting structure is then similar to the one we would obtain for a parabolic problem with pure Dirichlet boundary conditions. \smallskip

In general, dynamic boundary conditions appear in applications where the momentum on the boundary should not be neglected. For hyperbolic systems, such boundary conditions are of particular interest in fluid-structure interaction, where one component can be modelled in form of a boundary layer instead of using a full model~\cite{Hip17}. For the heat equation, dynamic boundary conditions can be found in~\cite[Ch.~2]{DuvL76} and enable a proper way to model a heat source on the boundary~\cite{Gol06}. On the other hand, dynamic boundary conditions may be used as stabilizing feedback control on the boundary~\cite{KomZ90}. 

An abstract framework for the weak formulation of parabolic systems with dynamic boundary conditions was recently presented in~\cite{KovL17}. Therein, it was shown that the problem fits into the standard formulation of parabolic problems if the inner products are adapted accordingly. 
Formulations including semigroups have received much more attention in the literature. 
Here, the boundary conditions are prescribed within the domain of the differential operator. The proof of the analytic semigroup property in~\cite{EngF05} considers a decoupling similar to the idea presented in the present paper but without the inclusion of a Lagrange multiplier.  \smallskip

In this paper, we interpret such parabolic initial-boundary value problems as a coupled system including dynamics in the bulk and on the boundary. As we consider the weak formulation of the problem, we have a direct connection to spatial discretization schemes. This may lead to a new class of robust numerical methods and allows independent meshes in the domain and on the boundary.
%
%
\section{Preliminaries}\label{sec:prelim}
This section is devoted to the introduction of needed functions spaces and the trace operator. Further, we discuss parabolic systems with Dirichlet boundary conditions and their formulation as constrained system. It turns out that this is helpful for the formulation presented in Section~\ref{sec:dynBC}.
%
%
\subsection{Function spaces and the trace operator}\label{sec:prelim:spaces}
Throughout this paper, $\Omega \subset \R^d$ denotes a bounded Lipschitz domain with boundary $\Gamma:= \partial\Omega$ and $[0, T]$ the bounded time interval of interest. We consider the standard Sobolev spaces
\[
  V := H^1(\Omega), \qquad
  V_0 := H^1_0(\Omega), \qquad 
  Q := H^{-1/2}(\Gamma). 
\]
The corresponding dual spaces are given by~$V^*$,~$V_0^*=H^{-1}(\Omega)$, and~$Q^*= H^{1/2}(\Gamma)$, respectively. With the pivot space~$H:=L^2(\Omega)$ this leads to the Gelfand triples $V, H, V^*$ and $V_0, H, V^*_0$, see~\cite[Ch.~23.4]{Zei90a}. Also the trace spaces form a Gelfand triple, namely with the pivot space $L^2(\Gamma)$. 

In the bulk as well as on the boundary, we write $(\, \cdot\, , \cdot\, )$ for the inner product of the corresponding pivot space, i.e., for $u, v \in H$ and $p, q \in L^2(\Gamma)$ we have 
\[
  (u, v) 
  := (u, v)_{L^2(\Omega)}
  = \int_\Omega u(x)\, v(x) \dx, \qquad
  (p, q) 
  := (p, q)_{L^2(\Gamma)}
  = \int_\Gamma p(x)\, q(x) \dx.
\]
Further, we use duality pairings, i.e., for $u\in V$ and $f\in V^*$ we write $\langle f, u\rangle := \langle f, u\rangle_{V^*,V}$. If we have~$f\in L^2(\Omega)$, then the embedding given by the Gelfand triple implies~$\langle f, u\rangle = (f, u)$. Since we deal with time-dependent problems, the solution spaces will be Sobolev-Bochner spaces, cf.~\cite[Ch.~25]{Wlo87}. This includes spaces of the form~$L^2(0,T;V)$, i.e, abstract functions~$u\colon [0,T] \to V$ with $\int_0^T \Vert u(t)\Vert^2_V \dt < \infty$. 

A decisive role in this paper plays the trace operator, which we denote by~$\calD\colon V \to Q^*$. Recall that this operator is defined through~$\calD u := u|_\Gamma$ for continuous functions $u \in C^0(\overline \Omega) \cap V$, which are dense in~$V$. For prescribed boundary data~$g\in Q^*$, Dirichlet boundary conditions then simply read~$\calD u = g$. The trace operator~$\calD$ is surjective~\cite[Th.~2.21]{Ste08} and satisfies an inf-sup condition \cite[Lem.~4.7]{Ste08}. The corresponding dual operator~$\calD^*$ maps from~$Q$ to~$V^*$ and is defined by~$\langle \calD^* \lambda, u \rangle := \langle \lambda, \calD u \rangle_{Q,Q^*}$ for all~$u\in V$ and~$\lambda \in Q$.
%
%
\subsection{Homogeneous Dirichlet boundary conditions}\label{sec:prelim:homDirichlet}
In order to get used to the abstract setting, we first consider the homogeneous case, i.e., we consider the parabolic equation   
\begin{align}
\label{eq:homDirichletBC}
  \dot u - \nabla\cdot(\kappa \nabla u) = f \quad\text{in } \Omega, \qquad
  u = 0 \quad\text{on } \Gamma
\end{align}
with initial condition $u(0) = \uo$. For the diffusion parameter~$\kappa \in L^\infty(\Omega)$ we assume~$\kappa(x) \ge c_\kappa > 0$ for almost all~$x\in \Omega$. The corresponding weak formulation, which we obtain by an application of the integration by parts formula, can be written as an operator equation, namely 
\begin{align}
\label{eq:PDAE:homDirichlet}
  \dot u + \calK u = f \qquad \text{in } V_0^*.
\end{align}
This equation is stated in~$V_0^*$, meaning that we consider test functions in $V_0$. The linear operator~$\calK\colon V\to V^*$ is defined for~$u, v\in V$ by
\begin{align}
\label{def:calK}
  \langle \calK u, v\rangle
  := \int_{\Omega} (\kappa \nabla u) \cdot \nabla v  \dx.
\end{align}
This operator is continuous and, due to the assumptions on~$\kappa$, elliptic on the subspace~$V_0$. We emphasize that we may replace~$\calK$ by any other continuous operator mapping from~$V$ to~$V^*$ and satisfying a G\aa{}rding inequality on~$V_0$. This then leads to the well-known result that~$f\in L^2(0,T;V_0^*)$ and~$\uo \in H$ imply the existence of a unique solution of~\eqref{eq:PDAE:homDirichlet} satisfying~$u \in L^2(0,T;V_0)$ and~$\dot u \in L^2(0,T;V_0^*)$, cf.~\cite[Ch.~26]{Wlo87}. 
%
%
\subsection{Inhomogeneous Dirichlet boundary conditions}\label{sec:prelim:Dirichlet}
We now turn to general time-dependent Dirichlet boundary conditions. For this, we replace the homogeneous boundary condition in~\eqref{eq:homDirichletBC} by~$u = g$ on~$\Gamma$ for given data~$g\colon [0,T] \to Q^*$. 
%
The standard procedure is to construct a function~$u_g$ on the domain~$\Omega$ with~${u_g}|_\Gamma = g$ in each time point. Then, one considers~$\tilde u := u - u_g$ as the new unknown. Assuming sufficient regularity, this leads to an operator equation for~$\tilde u$ similar to~\eqref{eq:PDAE:homDirichlet} but with an adapted right-hand side and initial condition.

As preparation for the following section, we rather consider the formulation as constrained operator equation, where the boundary condition is given as an explicit constraint, cf.~\cite{Sim00,Alt15}. This then leads to a PDAE. In contrast to the homogeneous case, we consider test functions in~$V$ rather than~$V_0$ and introduce a Lagrange multiplier~$\lambda\colon [0,T] \to Q$ in order to enforce the constraint~$\calD u = g$. This then leads to the constrained system 
\begin{subequations}
\label{eq:PDAE:Dirichlet}
\begin{align}
	\dot u + \calK u + \calD^*\lambda &= f\qquad\text{in } V^*, \label{eq:PDAE:Dirichlet:a} \\
	\calD u \phantom{j + \calD\lambda} &= g\qquad\text{in } Q^*.  \label{eq:PDAE:Dirichlet:b}
\end{align}
\end{subequations}
The solution $u$ of~\eqref{eq:PDAE:Dirichlet} obviously satisfies the boundary conditions. Further, restricting the test functions in~\eqref{eq:PDAE:Dirichlet:a} to $V_0$, we obtain~$\dot u + \calK u = f$. In this particular case, the Lagrange multiplier allows the physical interpretation of the normal derivative of $u$ along the boundary~\cite{Sim00}. 
%
%
\begin{theorem}
Assume right-hand sides~$f\in L^2(0,T;V^*)$, $g\in H^1(0,T;Q^*)$ and a consistent initial condition $u(0) = \uo \in V$, i.e., $\calD \uo = g(0)$. Then, system~\eqref{eq:PDAE:Dirichlet} has a unique distributional solution satisfying~$u \in L^2(0,T;V)$ and~$\dot u + \calD^* \lambda \in L^2(0,T;V^*)$.  
\end{theorem}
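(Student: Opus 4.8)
The plan is to reduce the constrained system~\eqref{eq:PDAE:Dirichlet} to the unconstrained parabolic problem on~$V_0$ treated in Section~\ref{sec:prelim:homDirichlet}, and then to recover the Lagrange multiplier by means of the inf-sup condition for~$\calD$. First I would lift the boundary data: since the trace operator~$\calD\colon V\to Q^*$ is surjective and satisfies an inf-sup condition, it admits a bounded right inverse, so that the data~$g\in H^1(0,T;Q^*)$ gives rise to a lifting~$u_g \in H^1(0,T;V)$ with~$\calD u_g = g$ pointwise in time. Writing~$\tilde u := u - u_g$ turns the constraint~\eqref{eq:PDAE:Dirichlet:b} into~$\calD\tilde u = 0$, i.e.,~$\tilde u(t)\in V_0$, and transforms~\eqref{eq:PDAE:Dirichlet:a} into an equation for~$\tilde u$ with modified right-hand side~$\tilde f := f - \dot u_g - \calK u_g$. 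By the mapping properties of~$\calK$ and the regularity of~$u_g$ we have~$\tilde f \in L^2(0,T;V^*)$, whose restriction to test functions in~$V_0$ lies in~$L^2(0,T;V_0^*)$.

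Next I would restrict the tested equation to~$V_0$. For~$v\in V_0$ the multiplier term drops out because~$\langle \calD^*\lambda, v\rangle = \langle \lambda, \calD v\rangle_{Q,Q^*} = 0$, so that~$\tilde u$ solves the homogeneous Dirichlet problem~$\dot{\tilde u} + \calK \tilde u = \tilde f$ in~$V_0^*$ with initial value~$\tilde u(0) = \uo - u_g(0)$. The consistency assumption~$\calD\uo = g(0)$ forces~$\calD\tilde u(0) = 0$, hence~$\tilde u(0)\in V_0 \subset H$, which is exactly the admissibility required by the standard result quoted at the end of Section~\ref{sec:prelim:homDirichlet}. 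That result yields a unique~$\tilde u \in L^2(0,T;V_0)$ with~$\dot{\tilde u}\in L^2(0,T;V_0^*)$, and setting~$u := \tilde u + u_g \in L^2(0,T;V)$ produces a function satisfying both the constraint and, on~$V_0$, the dynamics.

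It remains to reconstruct the multiplier and to establish the claimed regularity, and this is where I expect the main difficulty. The natural candidate for the combined quantity is~$\mu := f - \calK u$, which lies in~$L^2(0,T;V^*)$ by construction, so that the assertion~$\dot u + \calD^*\lambda \in L^2(0,T;V^*)$ should be read as the identity~$\dot u + \calD^*\lambda = \mu$. The subtlety is that~$\dot u$ on its own is controlled only in the larger dual~$V_0^*$, since the parabolic theory delivers no more than~$\dot{\tilde u}\in L^2(0,T;V_0^*)$; only the corrected object~$\dot u + \calD^*\lambda$ can be expected to land in~$V^*$. To extract~$\lambda$ I would observe that the functional~$v\mapsto \langle \mu, v\rangle - \tfrac{\dif}{\dif t}(u,v)$ vanishes on~$V_0$ by the reduced dynamics and therefore, by surjectivity of~$\calD$, factors through the trace; the inf-sup condition then identifies this factorisation with a multiplier~$\lambda$, while the injectivity of~$\calD^*$ (equivalently, the closed-range characterisation~$\operatorname{range}(\calD^*) = V_0^\circ$ provided by the inf-sup condition) renders it unique. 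Uniqueness of the full solution follows by linearity: homogeneous data force~$\tilde u = 0$ through the uniqueness of the reduced problem, hence~$u = 0$, and then~$\calD^*\lambda = 0$ together with the injectivity of~$\calD^*$ gives~$\lambda = 0$. Finally, the embedding~$\tilde u \in C([0,T];H)$ together with~$u_g \in C([0,T];V)$ guarantees that the initial condition~$u(0) = \uo$ is attained.
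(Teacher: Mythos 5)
Your proof is correct, but it takes a genuinely different route from the paper: the paper disposes of this theorem in one line, citing the abstract operator-DAE well-posedness result \cite[Th.~3.3]{EmmM13} and merely noting that its two hypotheses --- positivity (a G\aa{}rding inequality) of $\calK$ on $V_0=\ker\calD$ and inf-sup stability of $\calD$ --- hold here. What you do is, in effect, reconstruct inside this concrete setting the argument that such an abstract result packages: (i) lift the boundary data by a bounded right inverse of $\calD$, which is exactly where the assumption $g\in H^1(0,T;Q^*)$ is consumed (it yields $\dot u_g\in L^2(0,T;V)$); (ii) reduce to the homogeneous parabolic problem on $V_0$ and invoke the standard theory recalled in Section~\ref{sec:prelim:homDirichlet}, with the consistency condition $\calD\uo=g(0)$ ensuring an admissible initial value and its attainment; (iii) recover $\lambda$ by a de Rham-type argument, using that the inf-sup condition makes $\calD^*$ injective with closed range equal to the annihilator $V_0^\circ$. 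Both routes rest on the same two pillars, so nothing differs in mathematical substance; what your version buys is self-containedness and transparency --- in particular it makes explicit why only the combination $\dot u + \calD^*\lambda$, and not $\dot u$ alone, lands in $L^2(0,T;V^*)$, a point the theorem's statement leaves implicit --- while the citation buys brevity and somewhat greater generality (the result of \cite{EmmM13} covers semi-coercive operators and is reused verbatim for the dynamic-boundary systems later in the paper). The one step you should flesh out in a complete write-up is (iii): the functional you factor through the trace must be formed distributionally in time, i.e.\ for $\phi\in C_0^\infty(0,T)$ one applies the closed-range argument to $v\mapsto \int_0^T \bigl[\langle f-\calK u, v\rangle\,\phi + (u,v)\,\dot\phi\bigr]\dt$, and one must check that the resulting map $\phi\mapsto\lambda(\phi)\in Q$ is continuous so that $\lambda$ is a genuine $Q$-valued distribution; this follows from the quantitative bound $\Vert q\Vert_Q \lesssim \Vert \calD^*q\Vert_{V^*}$ that the inf-sup condition provides, but it is the part your outline treats most lightly.
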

\begin{proof}
The result follows by the positivity of~$\calK$ and the inf-sup stability of~$\calD$, cf.~\cite[Th.~3.3]{EmmM13}. 
\end{proof}
%
%
\section{Dynamic boundary conditions}\label{sec:dynBC}
Finally, we address linear parabolic problems with dynamic boundary conditions, i.e., 
\begin{subequations}
\label{eq:dynamicBC}
\begin{align}
  \dot  u - \nabla\cdot(\kappa \nabla u) &= f \qquad\text{in } \Omega, \label{eq:dynamicBC:a} \\
  \dot u - \beta\, \Delta_\Gamma u + \partialkn u + \alpha\, u &= g \qquad \text{on } \Gamma \label{eq:dynamicBC:b}
\end{align}
\end{subequations}
with initial condition $u(0) = \uo$. We denote the unit normal vector by~$n$ and the corresponding normal derivative by~$\partial_{\kappa,n} u := n\cdot(\kappa \nabla u)$. The Laplace-Beltrami operator is denoted by~$\Delta_\Gamma$, see~\cite[Ch.~16.1]{GilT01}. 
For the parameters we assume~$\kappa$ to be positive as before, $\alpha \in L^\infty(\Omega)$, and~$\beta\ge 0$ constant. The boundary condition~\eqref{eq:dynamicBC:b} is called \emph{locally reacting} for $\beta= 0$ and \emph{non-local} otherwise. The aim of this section is to derive an operator formulation of~\eqref{eq:dynamicBC} as a coupled system. 
%
%
\subsection{Locally reacting dynamic boundary conditions}\label{sec:dynBC:local}
We consider dynamic boundary conditions without the Laplace-Beltrami operator, i.e., we assume $\beta= 0$. These so-called \emph{Wentzell boundary conditions} then read 
\[
  \dot u + \partialkn u + \alpha\, u 
  = g \qquad \text{on } \Gamma. 
\]
We now introduce a new variable, namely $p:=u|_\Gamma$, such that system~\eqref{eq:dynamicBC} with $\beta=0$ is equivalent to 
\begin{subequations}
\label{eq:dynBC:local:newVar}
\begin{align}
  \dot  u - \nabla\cdot(\kappa \nabla u) &= f \qquad\text{in } \Omega, \label{eq:dynBC:local:newVar:a} \\
  \dot p + \partialkn u + \alpha\, p &= g \qquad \text{on } \Gamma,  \label{eq:dynBC:local:newVar:b} \\
  p - u &= 0 \qquad\text{on } \Gamma. \label{eq:dynBC:local:newVar:c} 
\end{align}
\end{subequations}
Note that~\eqref{eq:dynBC:local:newVar:a} and~\eqref{eq:dynBC:local:newVar:b} are dynamic equations for~$u$ and~$p$, respectively, whereas equation~\eqref{eq:dynBC:local:newVar:c} marks the coupling condition. For the weak formulation we define the spaces
\[
  \V := H^1(\Omega) \times H^{1/2}(\Gamma), \qquad
  \cH := L^2(\Omega) \times L^{2}(\Gamma), \qquad  
  \Q := H^{-1/2}(\Gamma).
\]
On~$\V$ and~$\cH$ we define the norms~$\Vert \big[ \substack{u\\ p} \big] \Vert^2_\V := \Vert u \Vert^2_{H^1(\Omega)} + \Vert p \Vert^2_{H^{1/2}(\Gamma)}$ and $\Vert \big[ \substack{u\\ p} \big] \Vert^2_\cH := \Vert u \Vert^2_{L^2(\Omega)} + \Vert p \Vert^2_{L^{2}(\Gamma)}$, respectively. Further, we define the coupling operator $\calB\colon \V \to \Q^*$ as
\[
  \calB\, \big[ \substack{u\\ p} \big]
  := p - \calD u \in \Q^*.
\]
The dual operator $\calB^*\colon \Q \to \V^*$ is defined accordingly via~$\langle \calB^* \lambda, \big[ \substack{u\\ p} \big] \rangle := \langle \lambda, p - \calD u \rangle_{\Q, \Q^*}$. We now consider a test function $\big[ \substack{v\\ q} \big] \in\V$ and multiply equation~\eqref{eq:dynBC:local:newVar:a} by~$v$ and~\eqref{eq:dynBC:local:newVar:b} by~$q$. With~$\calK$ as defined in~\eqref{def:calK}, integration by parts and taking the sum leads to 
\begin{align*}
  \langle f, v\rangle + (g, q)
  &= (\dot u, v) + \langle \calK u, v \rangle - \langle \partialkn u, \calD v \rangle 
  + (\dot p, q) + \langle \partialkn u, q \rangle + (\alpha p,q)  \\
  &= (\dot u, v) + (\dot p, q) + \langle \calK u, v \rangle  
  + \langle \partialkn u, \calB \big[ \substack{v\\ q} \big]\, \rangle + (\alpha p,q).
\end{align*}
Proceeding similarly as in Section~\ref{sec:prelim:Dirichlet}, we introduce a Lagrange multiplier $\lambda\colon [0,T] \to \Q$ in place of the normal trace $\partialkn u$. This then leads to the PDAE  
\begin{subequations}
\label{eq:PDAE:local}
\begin{align}
  \begin{bmatrix} \dot u \\ \dot p  \end{bmatrix}
  + \begin{bmatrix} \calK &  \\  & \alpha \end{bmatrix}
  \begin{bmatrix} u \\ p  \end{bmatrix}
  + \calB^*\lambda 
  &= \begin{bmatrix} f \\ g \end{bmatrix} \qquad \text{in } \V^*, \\
  \calB\, \begin{bmatrix} u \\ p  \end{bmatrix} \phantom{i + \calB \lambda}
  &= \phantom{[]} 0\hspace{2.751em} \text{in } \Q^*. 
\end{align}
\end{subequations}
%
%
with initial conditions~$u(0) = \uo$ and~$p(0) = \po$. Note that the structure of this operator equation is similar to the pure Dirichlet case in~\eqref{eq:PDAE:Dirichlet}. For the well-posedness one needs to discuss the inf-sup stability of~$\calB$ as well as the positivity of the differential operator~$\diag(\calK, \alpha)$.
\begin{lemma}
\label{lem:infsup:local}
The coupling operator~$\calB\colon \V \to \Q^*$ is inf-sup stable with constant~$1$, i.e.,
\[
  \adjustlimits \inf_{q\in \Q} \sup_{\big[ \substack{u\\ p} \big] \in \V} 
  \frac{\big\langle \calB \big[ \substack{u\\ p} \big],\, q \big\rangle_{\Q^*, \Q}}{\Vert \big[ \substack{u\\ p} \big] \Vert_\V\, \Vert q\Vert_\Q}
  \ge 1.
\] 
Further, the operator~$\diag(\calK, \alpha)\colon \V\to\V^*$ is continuous and satisfies a G\aa{}rding inequality on~$\V_0 := \ker \calB$. 
\end{lemma}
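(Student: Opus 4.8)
The plan is to handle the two assertions separately, as they rest on different structural features of the coupled system.

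For the inf-sup stability of~$\calB$, the key observation is that the lower bound can already be attained by restricting the supremum to test pairs of the special form~$\big[ \substack{0\\ p} \big]$. For such a choice one has~$\calB \big[ \substack{0\\ p} \big] = p$ and~$\Vert \big[ \substack{0\\ p} \big]\Vert_\V = \Vert p\Vert_{H^{1/2}(\Gamma)}$, so that for a fixed~$q\in\Q$ the quotient in the statement reduces to~$\langle p, q\rangle_{\Q^*,\Q}\,/\,(\Vert p\Vert_{H^{1/2}(\Gamma)}\,\Vert q\Vert_\Q)$. Taking the supremum over~$p\in H^{1/2}(\Gamma)=\Q^*$ and invoking the duality characterization of the norm on~$\Q = H^{-1/2}(\Gamma)$ as the dual norm with respect to~$H^{1/2}(\Gamma)$ gives exactly~$\Vert q\Vert_\Q$, so the restricted supremum equals~$1$. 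Since this holds uniformly in~$q$, the infimum over~$q$ is bounded below by~$1$. I expect this part to be essentially immediate; the only point to note is that the sign convention in the definition~$\calB \big[ \substack{u\\ p} \big] = p - \calD u$ makes the choice~$u=0$ harmless.

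For the second assertion, continuity of~$\diag(\calK,\alpha)$ decouples into its two diagonal blocks. The operator~$\calK\colon V\to V^*$ is continuous by its definition in~\eqref{def:calK}, while multiplication by~$\alpha$ is a bounded map~$L^2(\Gamma)\to L^2(\Gamma)$; composing with the continuous embeddings~$H^{1/2}(\Gamma)\hookrightarrow L^2(\Gamma)\hookrightarrow H^{-1/2}(\Gamma)$ shows that the lower block is continuous from~$H^{1/2}(\Gamma)$ into~$\Q^* = H^{-1/2}(\Gamma)$, whence~$\diag(\calK,\alpha)\colon\V\to\V^*$ is continuous. The substantive part is the G\aa{}rding inequality on~$\V_0 = \Kern\calB$. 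Here the decisive point is that every~$\big[ \substack{u\\ p} \big]\in\V_0$ satisfies~$p = \calD u$, so by continuity of the trace operator~$\Vert p\Vert_{H^{1/2}(\Gamma)} \le C_\calD\,\Vert u\Vert_{H^1(\Omega)}$, and the~$\V$-norm is therefore equivalent to~$\Vert u\Vert_{H^1(\Omega)}$ on~$\V_0$. Testing the diagonal operator against~$\big[ \substack{u\\ p} \big]$ produces~$\langle\calK u,u\rangle + (\alpha p,p)$; I would bound the first term below by~$c_\kappa\Vert\nabla u\Vert_{L^2(\Omega)}^2$ via ellipticity of~$\calK$ and the second below by~$-\Vert\alpha\Vert_{L^\infty}\Vert p\Vert_{L^2(\Gamma)}^2$. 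Adding a shift~$\gamma\,\Vert \big[ \substack{u\\ p} \big]\Vert_\cH^2$ with~$\gamma\ge\Vert\alpha\Vert_{L^\infty}$ absorbs the indefinite boundary term, and the accompanying~$\gamma\,\Vert u\Vert_{L^2(\Omega)}^2$ contribution upgrades the gradient bound to a full~$\Vert u\Vert_{H^1(\Omega)}^2$ bound. The norm equivalence then converts this into coercivity in the~$\V$-norm, with a resulting constant of the shape~$\mu = \min(c_\kappa,\gamma)/(1+C_\calD^2)$.

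The main obstacle is precisely this last step: neither diagonal block is coercive on its own, since~$\calK$ controls only gradients rather than the full~$H^1(\Omega)$-norm, and~$\alpha$ need not be sign-definite. The G\aa{}rding formulation is exactly what reconciles this — the shift~$\gamma\,\Vert\cdot\Vert_\cH^2$ simultaneously tames the possibly negative~$\alpha$-contribution on~$\Gamma$ and supplies the missing zeroth-order control of~$u$ on~$\Omega$ — while the reduction~$p=\calD u$ on~$\V_0$ is what allows the trace estimate to close the argument.
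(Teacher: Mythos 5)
Your proposal is correct and follows essentially the same route as the paper: for the inf-sup bound you take test pairs with $u=0$ and exploit the $H^{1/2}(\Gamma)$--$H^{-1/2}(\Gamma)$ duality (the paper realizes the same supremum via the Riesz representative of $q$ in $\Q^*$), and for the G\aa{}rding inequality you use exactly the paper's key observation that $p=\calD u$ on $\V_0$ lets the trace estimate transfer $H^1(\Omega)$-control of $u$ into $H^{1/2}(\Gamma)$-control of $p$, with the shift absorbing the indefinite $\alpha$-term and the missing $L^2(\Omega)$-part. The only cosmetic difference is that the paper splits $c_\kappa\Vert u\Vert^2_{H^1(\Omega)}$ into two halves inside the estimate, whereas you invoke the equivalence of $\Vert\cdot\Vert_\V$ with $\Vert u\Vert_{H^1(\Omega)}$ on $\V_0$ at the end.
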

\begin{proof}
For the inf-sup	stability let~$q\in \Q = H^{-1/2}(\Gamma)$ be arbitrary with norm~$1$. We set $\tilde u = 0$ and $\tilde p$ as the Riesz representation of $q$ in the Hilbert space $\Q^*= H^{1/2}(\Gamma)$ such that 
\[
  \sup_{\big[ \substack{u\\ p} \big] \in \V} 
  \frac{\big\langle \calB \big[ \substack{u\\ p} \big],\, q \big\rangle}{\Vert \big[ \substack{u\\ p} \big] \Vert_\V }
  \ge \frac{\langle \tilde p - \calD \tilde u,\, q \rangle}{\Vert \big[ \substack{\tilde u\\ \tilde p} \big] \Vert_\V}
  = \frac{\langle \tilde p,\, q \rangle}{\Vert \tilde p \Vert_{H^{1/2}(\Gamma)}}
  = \frac{\Vert \tilde p \Vert^2_{\Q^*}}{\Vert \tilde p \Vert_{\Q^*}}
  = \Vert \tilde p \Vert_{\Q^*}
  = \Vert q \Vert_{\Q}
  = 1.   
\]
The continuity of~$\diag(\calK, \alpha)$ follows directly form the continuity of~$\calK$ and the continuity of the embedding~$L^{2}(\Gamma) \hook H^{-1/2}(\Gamma)$. For the G\aa{}rding inequality, we consider~$\big[\substack{u\\ p} \big] \in \V_0 = \{ \big[ \substack{u\\ p} \big] \in \V\, |\, \calD u = p \}$ and estimate with the continuity constant of the trace operator~$C_\text{tr}$ and $C_\alpha := \Vert \alpha\Vert_{L^\infty(\Omega)}$, 
\begin{align*}
  \Big\langle \begin{bmatrix} \calK &  \\  & \alpha \end{bmatrix} 
  \begin{bmatrix} u \\ p \end{bmatrix},  \begin{bmatrix} u \\ p \end{bmatrix} \Big\rangle
  &\ge c_\kappa \Vert u \Vert^2_{H^1(\Omega)} - c_\kappa\Vert u \Vert^2_{L^2(\Omega)} + (\alpha p, p) \\
  &\ge \frac{c_\kappa}{2} \Vert u \Vert^2_{H^1(\Omega)} + \frac{c_\kappa}{2 C^2_\text{tr}} \Vert p \Vert^2_{H^{1/2}(\Gamma)} - c_\kappa\Vert u \Vert^2_{L^2(\Omega)} - C_\alpha \Vert p \Vert^2_{L^2(\Gamma)}  \\
  &\gtrsim \Vert \big[ \substack{u\\ p} \big] \Vert^2_{\V} - c\, \Vert \big[ \substack{u\\ p} \big] \Vert^2_{\cH}.
\end{align*}
Note that we have applied the fact that~$u$ and~$p$ can be replaced by each other on the boundary. 
\end{proof}
Using again~\cite[Th.~3.3]{EmmM13}, we obtain by the previous lemma the following well-posedness result. 
\begin{theorem}
Given a bounded Lipschitz domain~$\Omega$, we assume~$\big[ \substack{f\\ g} \big] \in L^2(0,T;\V^*)$ and consistent initial data~$\big[ \substack{\uo\\ \po} \big] \in \V_0$. Then, system~\eqref{eq:PDAE:local} has a unique distributional solution~$\big[ \substack{u\\ p} \big] \in L^2(0,T; \V)$, i.e, $u \in L^2(0,T; H^1(\Omega))$~and~$p \in L^2(0,T; H^{1/2}(\Gamma))$. 
\end{theorem}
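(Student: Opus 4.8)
The plan is to recognize the system~\eqref{eq:PDAE:local} as a special case of the abstract constrained evolution problem of~\cite[Th.~3.3]{EmmM13} and then to check that all of its hypotheses are in place. Abbreviating $\mathbf{u} := \big[ \substack{u\\ p} \big]$, $\calA := \diag(\calK, \alpha)\colon \V \to \V^*$, and $\mathbf{f} := \big[ \substack{f\\ g} \big]$, the problem reads $\dot{\mathbf{u}} + \calA \mathbf{u} + \calB^* \lambda = \mathbf{f}$ in $\V^*$ together with the homogeneous constraint $\calB \mathbf{u} = 0$ in $\Q^*$. This is structurally identical to the Dirichlet PDAE~\eqref{eq:PDAE:Dirichlet}; the decisive difference is that the constraint right-hand side now vanishes.

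First I would verify the functional-analytic prerequisites. The triple $\V$, $\cH$, $\V^*$ is a Gelfand triple with pivot space $\cH$, since it is the product of the two Gelfand triples $H^1(\Omega), L^2(\Omega), H^1(\Omega)^*$ and $H^{1/2}(\Gamma), L^2(\Gamma), H^{-1/2}(\Gamma)$; in particular the embeddings $H^1(\Omega) \hook L^2(\Omega)$ and $H^{1/2}(\Gamma) \hook L^2(\Gamma)$ are continuous and dense. The constraint operator $\calB$ maps into $\Q^* = H^{1/2}(\Gamma)$ and its dual $\calB^*$ into $\V^*$, exactly as the abstract setting demands.

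Next I would insert the two operator properties proven in Lemma~\ref{lem:infsup:local}: the inf-sup stability of $\calB$ (with constant $1$) and the continuity together with the G\aa{}rding inequality of $\calA$ on the kernel $\V_0 = \ker \calB$. Combined with the data hypothesis $\mathbf{f} \in L^2(0,T;\V^*)$ and the consistent initial value $\big[ \substack{\uo\\ \po} \big] \in \V_0$, these are precisely the assumptions of~\cite[Th.~3.3]{EmmM13}. The abstract theorem then delivers a unique distributional solution with $\mathbf{u} \in L^2(0,T;\V)$ and $\dot{\mathbf{u}} + \calB^* \lambda \in L^2(0,T;\V^*)$; reading off the two components yields $u \in L^2(0,T;H^1(\Omega))$ and $p \in L^2(0,T;H^{1/2}(\Gamma))$, as claimed.

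Since Lemma~\ref{lem:infsup:local} has already absorbed the analytic difficulties, the remaining work is essentially bookkeeping, and the one point I would take care to justify is the weaker data requirement. In contrast to the Dirichlet Theorem, where $g \in H^1(0,T;Q^*)$ was needed, here the boundary datum $g$ enters the dynamic equation~\eqref{eq:dynBC:local:newVar:b} and not the constraint, so that $\calB \mathbf{u} = 0$ is homogeneous in time. This is exactly why~\cite[Th.~3.3]{EmmM13} applies under the milder assumption $\mathbf{f} \in L^2(0,T;\V^*)$, with consistency reducing to the purely algebraic condition $\big[ \substack{\uo\\ \po} \big] \in \V_0$, that is $\calD \uo = \po$.
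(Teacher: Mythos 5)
Your proposal matches the paper's own proof: the paper likewise concludes well-posedness by invoking \cite[Th.~3.3]{EmmM13} together with Lemma~\ref{lem:infsup:local} (inf-sup stability of $\calB$ and the G\aa{}rding inequality of $\diag(\calK,\alpha)$ on $\V_0 = \ker\calB$). Your additional bookkeeping --- verifying the Gelfand triple structure of $\V$, $\cH$, $\V^*$ and explaining why the homogeneous constraint removes the $H^1(0,T;Q^*)$ data requirement of the Dirichlet case --- is correct and simply makes explicit what the paper leaves implicit.
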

\begin{remark}
We emphasize that the condition on the initial value may be relaxed to~$\big[ \substack{\uo\\ \po} \big] \in \cH$, cf.~\cite[Rem.~6.9]{Alt15}. Further, we like to mention the connection to the abstract setting introduced in~\cite{KovL17}. Therein, the ansatz space equals~$\V_0=\ker\calB$, i.e., the connection of~$u$ and the boundary variable~$p$ is given a priori.
\end{remark}
%
%
%
\subsection{Nonlocal dynamic boundary conditions}\label{sec:dynBC:nonlocal}
Finally, we consider the case~$\beta > 0$. Due to the Laplace-Beltrami operator the boundary condition is then non-local. As in the previous subsection, we introduce~$p:=u|_\Gamma$ such that~\eqref{eq:dynamicBC} is equivalent to 
\begin{subequations}
\label{eq:dynBC:nonLocal:newVar}
\begin{align}
	\dot  u - \nabla\cdot(\kappa \nabla u) &= f \qquad\text{in } \Omega, \label{eq:dynBC:nonLocal:newVar:a} \\
	\dot p - \beta\, \Delta_\Gamma p + \partialkn u + \alpha\, p &= g \qquad \text{on } \Gamma,  \label{eq:dynBC:nonLocal:newVar:b} \\
	p - u &= 0 \qquad\text{on } \Gamma. \label{eq:dynBC:nonLocal:newVar:c} 
\end{align}
\end{subequations}
The presence of the Laplace-Beltrami operator requires an adjustment of the ansatz spaces to 
\[
  \V := H^{1}(\Omega) \times H^{1}(\Gamma), \qquad
  \cH := L^2(\Omega) \times L^{2}(\Gamma), \qquad  
  \Q := H^{-1/2}(\Gamma)
\]
with the corresponding norm $\Vert \big[ \substack{u\\ p} \big] \Vert^2_\V := \Vert u \Vert^2_{H^1(\Omega)} + \Vert p \Vert^2_{H^{1}(\Gamma)}$. The coupling operator $\calB\colon \V \to \Q^*$ remains unchanged. 
In order to obtain an operator formulation of~\eqref{eq:dynBC:nonLocal:newVar} we again derive the weak formulation via integration by parts. Here, we also have to integrate by parts on the boundary leading to the weak form of the Laplace-Beltrami operator, which we denote by~$\calK_\Gamma\colon H^{1}(\Gamma) \to H^{-1}(\Gamma)$. With a Lagrange multiplier~$\lambda\colon [0,T] \to \Q$ this then leads to  
\begin{subequations}
\label{eq:PDAE:nonlocal}
\begin{align}
	\begin{bmatrix} \dot u \\ \dot p  \end{bmatrix}
	+ \begin{bmatrix} \calK &  \\  & (\calK_\Gamma + \alpha) \end{bmatrix}
	\begin{bmatrix} u \\ p  \end{bmatrix}
	+ \calB^*\lambda 
	&= \begin{bmatrix} f \\ g \end{bmatrix} \qquad \text{in } \V^*, \\
	\calB\, \begin{bmatrix} u \\ p  \end{bmatrix} \phantom{i + \calB \lambda} &= \phantom{[]} 0\hspace{2.751em} \text{in } \Q^*.
\end{align}
\end{subequations}
%
%
For the well-posedness, we consider the following lemma.
\begin{lemma}
\label{lem:infsup:nonLocal}
Let~$C_\text{invTr}$ denote the continuity constant of the inverse trace theorem, cf.~\cite[Th.~2.22]{Ste08}. Then, the operator $\calB\colon \V \to \Q$ is inf-sup stable with constant~$1/C_\text{invTr}$. 
Further, the differential operator~$\diag(\calK, \calK_\Gamma+\alpha)\colon \V\to\V^*$ is continuous and satisfies a G\aa{}rding inequality on~$\V$. 
\end{lemma}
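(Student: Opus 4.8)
The plan is to verify the two assertions separately, following the template of Lemma~\ref{lem:infsup:local} but tracking the two effects of the surface diffusion: the boundary component of~$\V$ now carries the stronger norm~$\Vert\cdot\Vert_{H^1(\Gamma)}$, and~$\calK_\Gamma$ contributes coercivity on the boundary. The first effect forces the inverse trace theorem into the inf-sup argument (so the constant degrades from~$1$ to~$1/C_\text{invTr}$), whereas the second lets the G\aa{}rding inequality hold on all of~$\V$ rather than only on~$\ker\calB$.

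For the inf-sup estimate I fix~$q\in\Q$ with~$\Vert q\Vert_\Q=1$ and take its Riesz representation~$\tilde w\in\Q^*=H^{1/2}(\Gamma)$, so that~$\langle\tilde w,q\rangle_{\Q^*,\Q}=\Vert\tilde w\Vert^2_{\Q^*}=1$ and~$\Vert\tilde w\Vert_{\Q^*}=1$. The decisive observation is that~$\tilde w$ lies only in~$H^{1/2}(\Gamma)$ and in general not in~$H^1(\Gamma)$, so—unlike in the locally reacting case—it cannot be inserted into the~$p$-slot of an element of~$\V$. Instead I realize~$\tilde w$ as a bulk trace: denoting by~$\calE\colon\Q^*\to V$ the bounded lifting from the inverse trace theorem, with~$\calD\calE w=w$ and~$\Vert\calE w\Vert_{H^1(\Omega)}\le C_\text{invTr}\Vert w\Vert_{\Q^*}$, I set~$\tilde u:=-\calE\tilde w$ and~$\tilde p:=0$. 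Then~$\calB\,\big[\substack{\tilde u\\ \tilde p}\big]=\tilde p-\calD\tilde u=\tilde w$ while~$\Vert\big[\substack{\tilde u\\ \tilde p}\big]\Vert_\V=\Vert\calE\tilde w\Vert_{H^1(\Omega)}\le C_\text{invTr}$, so the supremum over~$\V$ is at least~$\Vert\tilde w\Vert^2_{\Q^*}/C_\text{invTr}=1/C_\text{invTr}$; taking the infimum over normalized~$q$ yields the claim.

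For the G\aa{}rding inequality I evaluate~$\diag(\calK,\calK_\Gamma+\alpha)$ on~$\big[\substack{u\\ p}\big]\in\V$ and handle the two diagonal blocks independently. Ellipticity of~$\kappa$ gives, as in the local case,~$\langle\calK u,u\rangle\ge c_\kappa\Vert u\Vert^2_{H^1(\Omega)}-c_\kappa\Vert u\Vert^2_{L^2(\Omega)}$. The new contribution is the positive weak Laplace--Beltrami block: since~$\beta>0$, integration by parts on~$\Gamma$ gives~$\langle\calK_\Gamma p,p\rangle=\beta\Vert\nabla_\Gamma p\Vert^2_{L^2(\Gamma)}=\beta\big(\Vert p\Vert^2_{H^1(\Gamma)}-\Vert p\Vert^2_{L^2(\Gamma)}\big)$, which together with~$(\alpha p,p)\ge-C_\alpha\Vert p\Vert^2_{L^2(\Gamma)}$ supplies~$H^1(\Gamma)$-control of~$p$ modulo its~$L^2(\Gamma)$-norm. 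Summing the two blocks produces~$\langle\diag(\calK,\calK_\Gamma+\alpha)\big[\substack{u\\ p}\big],\big[\substack{u\\ p}\big]\rangle\gtrsim\Vert\big[\substack{u\\ p}\big]\Vert^2_\V-c\,\Vert\big[\substack{u\\ p}\big]\Vert^2_\cH$ with~$c=\max(c_\kappa,\beta+C_\alpha)$, i.e.\ the inequality on all of~$\V$; crucially, the surface operator already furnishes the boundary coercivity, so no appeal to~$\calD u=p$ is needed. Continuity is routine and follows from the continuity of~$\calK$ on~$V$, of~$\calK_\Gamma$ on~$H^1(\Gamma)$, and of multiplication by~$\alpha\in L^\infty(\Omega)$ combined with the embedding~$L^2(\Gamma)\hookrightarrow H^{-1}(\Gamma)$.

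The step I expect to be the main obstacle is the inf-sup estimate: one has to notice that the elementary construction of Lemma~\ref{lem:infsup:local} breaks down because the~$H^{1/2}(\Gamma)$ Riesz representative is not an admissible~$H^1(\Gamma)$ boundary datum, and that the correct remedy is to transport the target onto the bulk component through the inverse trace theorem, which is exactly what replaces the optimal constant~$1$ by~$1/C_\text{invTr}$.
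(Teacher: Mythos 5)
Your proposal is correct and follows essentially the same route as the paper: both arguments take the Riesz representative of~$q$ in~$H^{1/2}(\Gamma)$, place zero in the~$p$-slot, realize the representative (with a sign) as the trace of a bulk function bounded via the inverse trace theorem, and obtain the constant~$1/C_\text{invTr}$; the only cosmetic difference is that the paper fixes the concrete harmonic extension (the weak solution of~$-\Delta\tilde u=0$ with~$\calD\tilde u=-\tilde q$) where you invoke a generic bounded lifting~$\calE$, which amounts to the same bound. Your G\aa{}rding argument also matches the paper's (sketched) one, and you correctly identify the key structural point that the Laplace--Beltrami block provides boundary coercivity so the inequality holds on all of~$\V$ without using the constraint~$\calD u=p$.
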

\begin{proof}
Let $q\in \Q$ be arbitrary with norm~$1$ and $\tilde q \in \Q^*$ its Riesz representation. Further, set~$\tilde p = 0$ and $\tilde u \in V$ as the weak solution of $-\Delta \tilde u = 0$ with Dirichlet boundary condition $\calD \tilde u = -\tilde q$. The latter implies~$\Vert \tilde u \Vert_V \le C_\text{invTr} \Vert \tilde q\Vert_{\Q^*}$, which leads to the estimate 
\[
  \sup_{\big[ \substack{u\\ p} \big] \in \V} 
  \frac{\big\langle \calB \big[ \substack{u\\ p} \big],\, q \big\rangle}{\Vert \big[ \substack{u\\ p} \big] \Vert_\V }
  \ge \frac{\langle \tilde p - \calD \tilde u,\, q \rangle}{\Vert \big[ \substack{\tilde u\\ \tilde p} \big] \Vert_\V}
  = \frac{\langle \tilde q,\, q \rangle}{\Vert \tilde u \Vert_V}
  = \frac{\Vert \tilde q \Vert^2_{H^{1/2}(\Gamma)}}{\Vert \tilde u \Vert_{H^1(\Omega)}}  
  \ge \frac{\Vert \tilde q \Vert_{\Q^*}}{C_\text{invTr}} 
  = \frac{1}{C_\text{invTr}}.
\]
The remaining part of the proof is similar to the one of~Lemma~\ref{lem:infsup:local}, using the fact that the Laplace-Beltrami operator satisfies a G\aa{}rding inequality on~$H^1(\Gamma)$, cf.~\cite[Ch.~16.1]{GilT01}. 
\end{proof}
As a direct consequence of Lemma~\ref{lem:infsup:nonLocal} we conclude the unique solvability of the PDAE~\eqref{eq:PDAE:nonlocal}. 
\begin{theorem}
Given a bounded Lipschitz domain~$\Omega$, a right-hand side~$\big[ \substack{f\\ g} \big] \in L^2(0,T;\V^*)$, and consistent initial data~$\big[ \substack{\uo\\ \po} \big] \in \V_0$, there exists a unique distributional solution of~\eqref{eq:PDAE:nonlocal} with~$\big[ \substack{u\\ p} \big] \in L^2(0,T; \V)$, i.e, $u \in L^2(0,T; H^1(\Omega))$~and~$p \in L^2(0,T; H^{1}(\Gamma))$.  
\end{theorem}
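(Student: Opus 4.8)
The plan is to reduce the final theorem directly to the abstract well-posedness result \cite[Th.~3.3]{EmmM13} by verifying its structural hypotheses, exactly as was done for the locally reacting case. The abstract theorem governs constrained evolution equations of the form
\begin{align*}
  \dot x + \calA x + \calB^*\lambda = F, \qquad \calB x = 0,
\end{align*}
and guarantees a unique distributional solution provided that the differential operator~$\calA$ satisfies a G\aa{}rding inequality on the relevant space, the constraint operator~$\calB$ is inf-sup stable, the data lie in the correct Bochner spaces, and the initial value is consistent. First I would observe that Lemma~\ref{lem:infsup:nonLocal} already supplies the two decisive structural ingredients: the inf-sup stability of~$\calB$ with constant~$1/C_\text{invTr}$, and both the continuity and the G\aa{}rding inequality of~$\diag(\calK, \calK_\Gamma + \alpha)$ on~$\V$. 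Thus the bulk of the analytical work has been discharged by the lemma, and the proof of the theorem amounts to checking that the remaining hypotheses of the abstract framework are met and then invoking it.

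The second step is to confirm the data compatibility. By assumption~$\big[ \substack{f\\ g} \big] \in L^2(0,T;\V^*)$, which is precisely the forcing regularity demanded by \cite[Th.~3.3]{EmmM13}, and the initial datum~$\big[ \substack{\uo\\ \po} \big] \in \V_0 = \ker\calB$ is consistent in the sense that it satisfies the constraint~$\calB\big[ \substack{\uo\\ \po} \big] = 0$, i.e.~$\po = \calD\uo$. One point worth a sentence is that the G\aa{}rding inequality on the full space~$\V$ (rather than merely on the kernel~$\V_0$, as in Lemma~\ref{lem:infsup:local}) is in fact a stronger and more convenient hypothesis here, since the Laplace--Beltrami operator~$\calK_\Gamma$ contributes coercivity in the~$H^1(\Gamma)$ component directly, removing the need to transfer coercivity from~$u$ to~$p$ across the trace. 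With the inf-sup and G\aa{}rding conditions in hand and the data placed correctly, the abstract theorem yields existence and uniqueness of a distributional solution~$\big[ \substack{u\\ p} \big] \in L^2(0,T;\V)$.

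Finally, I would unfold the product space~$\V = H^1(\Omega)\times H^1(\Gamma)$ to record the componentwise regularity~$u\in L^2(0,T;H^1(\Omega))$ and~$p\in L^2(0,T;H^1(\Gamma))$, which is the explicit form asserted in the statement. I do not expect any genuine obstacle in this proof, precisely because the difficulties have been isolated into Lemma~\ref{lem:infsup:nonLocal}; the only point requiring mild care is the bookkeeping between the spaces~$\Q$ and~$\Q^*$ in the inf-sup estimate and ensuring that the constant~$1/C_\text{invTr}$ is strictly positive (which holds since~$C_\text{invTr} < \infty$ by the inverse trace theorem), so that the abstract hypothesis is satisfied. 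The proof therefore reads as a short citation of Lemma~\ref{lem:infsup:nonLocal} together with an application of \cite[Th.~3.3]{EmmM13}.
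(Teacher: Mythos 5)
Your proposal is correct and follows exactly the paper's route: the paper also obtains the theorem as a direct consequence of Lemma~\ref{lem:infsup:nonLocal} combined with the abstract well-posedness result~\cite[Th.~3.3]{EmmM13}, just as in the locally reacting case. Your additional remarks (the G\aa{}rding inequality holding on all of~$\V$, consistency of the initial datum in~$\ker\calB$) are accurate elaborations of what the paper leaves implicit.
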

%
%
\section{Conclusion and discussion}
We have presented an alternative abstract formulation of parabolic systems with dynamic boundary conditions, namely as constrained operator system. This novel approach aims to provide a new perspective on these type of systems. In particular, this is of interest for the construction of new robust discretization methods. Since we include the coupling of~$u$ and~$p$ in form of an equation rather than restricting the ansatz space as in~\cite{KovL17}, we allow different spatial discretizations in the bulk and on the boundary. Further, the obtained structure in~\eqref{eq:PDAE:local} and~\eqref{eq:PDAE:nonlocal} enables a direct application of algebraically stable Runge-Kutta schemes~\cite{AltZ18}. 
%
\section*{Acknowledgement}
The author thanks David Hipp (KIT Karlsruhe) and Christoph Zimmer (TU Berlin) for valuable discussions on all sorts of boundary conditions.
%

%

\begin{thebibliography}{Wlo87}
	\bibitem[Alt15]{Alt15}
	R.~Altmann.
	\newblock {\em {R}egularization and {S}imulation of {C}onstrained {P}artial
		{D}ifferential {E}quations}.
	\newblock Dissertation, Technische Universit{\"a}t Berlin, 2015.
	
	\bibitem[AZ18]{AltZ18}
	R.~Altmann and C.~Zimmer.
	\newblock Runge-{K}utta methods for linear semi-explicit operator
	differential-algebraic equations.
	\newblock {\em Math. Comp.}, 87:149 -- 174, 2018.
	
	\bibitem[DL76]{DuvL76}
	G.~Duvaut and J.-L. Lions.
	\newblock {\em Inequalities in mechanics and physics}.
	\newblock Springer-Verlag, Berlin-New York, 1976.
	
	\bibitem[EF05]{EngF05}
	K.-J. Engel and G.~Fragnelli.
	\newblock Analyticity of semigroups generated by operators with generalized
	{W}entzell boundary conditions.
	\newblock {\em Adv. Differential Equ.}, 10(11):1301--1320, 2005.
	
	\bibitem[EM13]{EmmM13}
	E.~Emmrich and V.~Mehrmann.
	\newblock Operator differential-algebraic equations arising in fluid dynamics.
	\newblock {\em Comp. Methods Appl. Math.}, 13(4):443--470, 2013.
	
	\bibitem[Gol06]{Gol06}
	G.~R. Goldstein.
	\newblock Derivation and physical interpretation of general boundary
	conditions.
	\newblock {\em Adv. Differential Equ.}, 11(4):457--480, 2006.
	
	\bibitem[GT01]{GilT01}
	D.~Gilbarg and N.~S. Trudinger.
	\newblock {\em Elliptic Partial Differential Equations of Second Order}.
	\newblock Springer-Verlag, Berlin, 2001.
	
	\bibitem[Hip17]{Hip17}
	D.~Hipp.
	\newblock {\em A unified error analysis for spatial discretizations of
		wave-type equations with applications to dynamic boundary conditions}.
	\newblock Dissertation, Karlsruher Institut f\"ur Technologie (KIT), 2017.
	
	\bibitem[KL17]{KovL17}
	B.~Kov{\'a}cs and C.~Lubich.
	\newblock Numerical analysis of parabolic problems with dynamic boundary
	conditions.
	\newblock {\em IMA J. Numer. Anal.}, 37(1):1--39, 2017.
	
	\bibitem[KZ90]{KomZ90}
	V.~Komornik and E.~Zuazua.
	\newblock A direct method for the boundary stabilization of the wave equation.
	\newblock {\em J. Math. Pures Appl.}, 69(1):33--54, 1990.
	
	\bibitem[Sim00]{Sim00}
	B.~Simeon.
	\newblock {\em {N}umerische {S}imulation Gekoppelter {S}ysteme von Partiellen
		und Differential-algebraischen {G}leichungen der {M}ehrk\"orperdynamik}.
	\newblock VDI Verlag, D{\"u}sseldorf, 2000.
	
	\bibitem[Ste08]{Ste08}
	O.~Steinbach.
	\newblock {\em Numerical Approximation Methods for Elliptic Boundary Value
		Problems: Finite and Boundary Elements}.
	\newblock Springer-Verlag, New York, 2008.
	
	\bibitem[Wlo87]{Wlo87}
	J.~Wloka.
	\newblock {\em Partial Differential Equations}.
	\newblock Cambridge University Press, Cambridge, 1987.
	
	\bibitem[Zei90]{Zei90a}
	E.~Zeidler.
	\newblock {\em Nonlinear functional analysis and its applications. {II/A}:
		Linear monotone operators}.
	\newblock Springer, Berlin, Germany, 1990.
	
\end{thebibliography}
\end{document}